\documentclass{article}
\usepackage{authblk}
\usepackage{graphicx}
\usepackage{multirow}%
\usepackage{amsmath,amssymb,amsfonts}%
\usepackage{amsthm}%
\usepackage{mathrsfs}%
\usepackage[title]{appendix}%
\usepackage{xcolor}%
\usepackage{textcomp}%
\usepackage{manyfoot}%
\usepackage{booktabs}%
\usepackage{algorithm}%
\usepackage{algorithmicx}%
\usepackage{algpseudocode}%
\usepackage{listings}%

\newtheorem{theorem}{Theorem}
\theoremstyle{definition} 
\newtheorem{definition}{Definition}

\usepackage[utf8]{inputenc}
\usepackage{biblatex}

\title{Toward Well-Posed Problems in the Social Sciences: Hadamard's Criteria as Epistemic Guardrails}
\author{Don Li}
\affil{Department of Mathematics \& Statistics \\ Portland State University}
\date{September 15, 2026}

\begin{document}

\maketitle

\noindent
\textbf{Abstract} We develop an interdisciplinary framework for evaluating the epistemic robustness of social-scientific inquiry through Hadamard's criteria for well-posed problems: existence, uniqueness, and stability. Reinterpreting these criteria not as demands for deterministic certainty but as methodological guardrails, we show how they diagnose recurrent failures of identification and inference across quantitative and qualitative paradigms. In quantitative research (e.g., econometric modeling), instability manifests when substantive conclusions depend sensitively on model specifications or data filtering. In interpretivist qualitative research, non-uniqueness and non-falsifiability arise when theoretical frameworks are elastic enough to accommodate contradictory observations without pre-specified rejection criteria. We frame these failures as inverse problems where the mapping from empirical data to substantive claims fails to satisfy existence, uniqueness, or stability. Finally, we propose cross-paradigmatic safeguards: ex-ante falsification criteria, empirical boundary conditions, multiverse sensitivity auditing, and cross-observer validation, to ensure social-scientific claims remain appropriately constrained, falsifiable, and robust to perturbations in evidence and interpretation. \\
\\
\textbf{Keywords} Hadamard well-posedness, Social science epistemology, Model falsifiability, Formal methodology, Replication crisis

\section{Introduction}

In the wake of the replication crisis notably identified by \cite{Ioa05}, the social sciences are facing a crisis of epistemic confidence. Meta-analyses of quantitative social science research have shown troubling failure to replicate established findings \cite{Mis+26}, \cite{Cam+18}. These failures are not adduced to outright fraud, despite such cases occurring \cite{Baz25}, but rather to subtler methodological frailties (e.g., specification searching, $p$-hacking, unacknowledged researcher degrees of freedom, and the amplification of sampling noise through weak identification \cite{SNS11}, \cite{GL14}). Concurrently, strongly interpretivist paradigms in qualitative social science research encounter a complementary epistemic challenge: the deployment of theoretical frameworks so elastic such that they can account for virtually any empirical observation, rendering their core substantive claims effectively unfalsifiable. K. Popper (1902-94) famously argued that falsification is the defining feature of scientific inquiry (and thereby answers the demarcation problem in philosophy of science) \cite{Pop59}, lamenting the lack of falsifiability in the prominent intellectual paradigms of his day, manifested in

\begin{quote}
``the incessant stream of confirmations, of observations which `verified' the theories in question; and this point was constantly emphasized by their adherents. A
Marxist could not open a newspaper without finding on every page confirming evidence for his interpretation of history; not only in the news, but also in its presentation—which revealed the class bias of the paper—and especially of course in what the paper did not say. The Freudian analysts emphasized that their theories were constantly verified by their `clinical observations'" \cite{Pop63}.
\end{quote}

\noindent
Popper's view is captured in the maxim that ``a theory that explains everything explains nothing". I. Lakatos (1922-74) warned that research programs that fail to produce insights with any predictive value and that sustain themselves via post-hoc/ad-hoc rationalizations are at risk of becoming \textit{degenerating} \cite{Lak78}. \\
\\
These failure modes are often treated as distinct pathologies belonging to isolated methodological silos, i.e., a lack of rigor in the quantitative social sciences or excessive relativism/hyper-subjectivity in the qualitative social sciences. We consider the notion that these pathologies share a structural origin. Both reflect a fundamental breakdown in the mapping from empirical evidence to substantive theoretical claims. When substantive conclusions depend markedly on arbitrary choices in data pre-processing, or when a theoretical apparatus can account for mutually exclusive empirical outcomes with equal ease, the process of inference ceases to be constrained by evidence. \\
\\
Here, we propose that Hadamard’s criteria for a \textit{well-posed problem}, originally formulated in the context of partial differential equations (PDE's) and mathematical physics \cite{Had1902}, \cite{Had1923}, provide a rigorous, cross-paradigmatic diagnostic framework for evaluating social-scientific claims. We formalize Hadamard's notion of a well-posed problem as follows:

\begin{definition}[Well-Posed Problem]
A problem is \textbf{well-posed} iff it satisfies each of the following criteria:

\begin{enumerate}
    \item \textbf{Existence} A solution can be constructed from the given input (data),

    \item \textbf{Uniqueness} The solution is unambiguously determined from the input, and

    \item \textbf{Stability} The solution depends continuously on the data such that small perturbations in the input yield small perturbations in the output.
\end{enumerate}

\noindent
A problem that does not satisfy all of the above criteria is \textbf{ill-posed}.
\end{definition}

\noindent
We conceptualize social-scientific inquiry in terms of an \textit{inverse problem}, i.e., the task of reconstructing underlying generative mechanisms, parameters, or theoretical structures from incomplete, noisy, and context-bound empirical observations. Viewed through this lens, the replication crisis and theoretical over-elasticity cannot simply be explained as disparate cultural defects of research practice, but rather epistemically as instances of ill-posedness.

\begin{itemize}
    \item \textbf{Non-Existence} corresponds to structural under-identification and construct invalidity, where no consistent theoretical parameterization can account for the observed empirical reality.

    \item \textbf{Non-Uniqueness} corresponds to observational equivalence and unfalsifiable theoretical elasticity, where multiple incompatible theoretical claims map onto the exact same empirical evidence without adjudicative mechanisms.

    \item \textbf{Instability} corresponds to sensitivity to model specification, sample selection, and measurement noise, where infinitesimal perturbations in the empirical input induce catastrophic shifts in the substantive inference.
\end{itemize}

\noindent
We emphasize that our goal is not to impose a naive positivism on qualitative research, nor to reduce complex human phenomena to deterministic systems. Rather, we leverage Hadamard’s criteria as epistemic guardrails, i.e., general methodological conditions under which social-scientific claims remain empirical, bounded, and open to refutation, thus yielding insight with respect to ameliorating the replicability and falsifiability dilemmas plaguing social science research. \\
\\
We summarize the rest of this paper as follows. In section 2, we mathematically formalize Hadamard's criteria with prerequisite material from numerical analysis and motivate Hadamard's criteria with an example of a well-posed problem from numerical weather prediction (NWP). In section 3, we systematically map non-existence, non-uniqueness, and instability onto concrete failure modes using examples from econometric modeling in economics, survey research and psychometrics from experimental psychology, and ethnography from socio-cultural and political anthropology. In section 4, we present actionable methodological safeguards, including ex-ante specification boundaries, multiverse sensitivity auditing, and structured cross-observer validation.

\section{Formal Foundations of Hadamard's Criteria}

To construct a rigorous diagnostic for social-scientific inquiry, we must first formalize empirical research as an inverse problem. Let $\mathcal{E}$ denote a normed vector space of empirical observations (e.g., sample statistics, survey response vectors) equipped with norm $\|\cdot\|_\mathcal{E}$. Let $\mathcal{M}$ denote a metric space of substantive theoretical claims, parameters, or generative mechanisms equipped with norm $\|\cdot\|_\mathcal{M}$. In standard deductive modeling, the \textit{forward problem} maps a known theoretical parameterization $m \in \mathcal{M}$ to predicted observable data $e \in \mathcal{E}$ via a forward operator $F: \mathcal{M} \to \mathcal{E}$, i.e., 

\begin{equation}
    F(m) = e.
\end{equation}

\noindent
Empirical social science, however, frequently operates in the inverse direction. Researchers observe empirical artifacts $e \in \mathcal{E}$ and attempt to reconstruct the underlying generative process $m \in \mathcal{M}$ via an inferential mapping or operator $T: \mathcal{E} \to \mathcal{M}$. In light of Hadamard's criteria, the inverse problem of determining $m = T(e)$ is well-posed iff the operator $T$ satisfies three fundamental axioms:

\begin{enumerate}
    \item \textbf{Existence:} For every empirical realization $e \in \mathcal{E}$, there exists at least one theoretical representation $m \in \mathcal{M}$ such that $T(e) = m$.
    \item \textbf{Uniqueness:} For all $e \in \mathcal{E}$, if $T(e) = m_1$ and $T(e) = m_2$, then $m_1 = m_2$, i.e., the operator $T$ is strictly injective.
    \item \textbf{Stability:} The operator $T$ is continuous. For all $\varepsilon > 0$, there exists a $\delta > 0$ such that for any perturbed dataset $e' \in \mathcal{E}$,
    \begin{equation}
        \|e - e'\|_\mathcal{E} < \delta \implies \|T(e) - T(e')\|_\mathcal{M} < \varepsilon.
    \end{equation}
\end{enumerate}

\noindent
If any of these conditions fail, the problem is ill-posed. In the natural sciences, ill-posedness often signals a flawed model/physical law or a missing physical constraint. In the social sciences, as we demonstrate later, ill-posedness manifests as fundamental failures of construct validity (non-existence), theoretical elasticity (non-uniqueness), or specification fragility (instability).

\subsection{Numerical Analysis Perspective}

To concretize these abstract operator-theoretic definitions, we examine the canonical linear inverse problem from numerical analysis and optimization, i.e.,

\begin{equation}
A\mathbf{x} = \mathbf{b},
\end{equation}

\noindent
where $A \in \mathbb{R}^{n \times n}$ represents a structural design or transformation matrix, $\mathbf{b} \in \mathbb{R}^n$ represents an observed outcome vector, and $\mathbf{x} \in \mathbb{R}^n$ represents the unknown parameter vector to be inferred. Despite its elementary appearance, solving linear systems in the form of Eqn. (3), especially via computationally efficient methods and in large-scale settings, undergirds much of modern scientific computing, statistics, and machine learning (ML) \cite{Sol15}. We use this canonical problem as a baseline for demonstrating how Hadamard's criteria operate in practice (numerically), illustrating how nominal model identification does not guarantee inferential stability. Before we do so, we provide the following result.

\begin{theorem}[Well-Posedness of Linear Inverse Systems]
Let $A \in \mathbb{R}^{n \times n}$ be a real matrix such that $\text{det}(A) \neq 0$, and let $\mathbf{b} \in \mathbb{R}^n$ be a given vector. Then the linear inverse problem $A\mathbf{x} = \mathbf{b}$ is well-posed in the sense of Hadamard. Moreover, under any induced matrix norm $\Vert{}\cdot\Vert{}$, the relative stability of the solution under perturbations $\delta \mathbf{b}$ is strictly bounded by the condition number $\kappa(A) = \Vert{}A\Vert{} \Vert{}A^{-1}\Vert{}$.
\end{theorem}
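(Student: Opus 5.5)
We treat the inverse problem $A\mathbf{x}=\mathbf{b}$ as an instance of the operator framework of Section 2, with $\mathcal{E}=\mathcal{M}=\mathbb{R}^n$ (both normed by the same vector norm $\Vert\cdot\Vert$ that induces the matrix norm). The inferential operator is $T(\mathbf{b})=A^{-1}\mathbf{b}$. We then verify the three axioms in order and finish with the perturbation bound.

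\textbf{Existence and uniqueness.} Since $\det(A)\neq 0$, the matrix $A$ is invertible; we can invoke the adjugate formula $A^{-1}=\operatorname{adj}(A)/\det(A)$. Hence $\mathbf{x}=A^{-1}\mathbf{b}$ satisfies $A\mathbf{x}=\mathbf{b}$ for every $\mathbf{b}$, which gives existence.

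For uniqueness, suppose $A\mathbf{x}_1=A\mathbf{x}_2=\mathbf{b}$. Then $A(\mathbf{x}_1-\mathbf{x}_2)=\mathbf{0}$, and multiplying on the left by $A^{-1}$ gives $\mathbf{x}_1=\mathbf{x}_2$. Equivalently, $\ker A=\{\mathbf{0}\}$, so $A$ is injective and $T$ is a well-defined single-valued map.

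\textbf{Stability.} Because $T$ is linear, $\Vert T(\mathbf{b})-T(\mathbf{b}')\Vert=\Vert A^{-1}(\mathbf{b}-\mathbf{b}')\Vert\le \Vert A^{-1}\Vert\,\Vert\mathbf{b}-\mathbf{b}'\Vert$. This uses the compatibility property of induced norms. The bound shows that $T$ is Lipschitz, hence continuous, with $\delta=\varepsilon/\Vert A^{-1}\Vert$.

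\textbf{Condition-number bound.} Write $A(\mathbf{x}+\delta\mathbf{x})=\mathbf{b}+\delta\mathbf{b}$, so that $\delta\mathbf{x}=A^{-1}\delta\mathbf{b}$ and therefore $\Vert\delta\mathbf{x}\Vert\le\Vert A^{-1}\Vert\Vert\delta\mathbf{b}\Vert$. Assume $\mathbf{b}\neq\mathbf{0}$, hence $\mathbf{x}\neq\mathbf{0}$. From $\Vert\mathbf{b}\Vert=\Vert A\mathbf{x}\Vert\le\Vert A\Vert\Vert\mathbf{x}\Vert$ we get $1/\Vert\mathbf{x}\Vert\le \Vert A\Vert/\Vert\mathbf{b}\Vert$. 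Multiplying the two inequalities yields
\[
\frac{\Vert\delta\mathbf{x}\Vert}{\Vert\mathbf{x}\Vert}\le \kappa(A)\,\frac{\Vert\delta\mathbf{b}\Vert}{\Vert\mathbf{b}\Vert}.
\]

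The main obstacle is interpretive rather than technical, namely the phrase ``strictly bounded''. The inequality is in general non-strict, and it is attained for suitable $\mathbf{b}$ and $\delta\mathbf{b}$: choose $\mathbf{x}$ with $\Vert A\mathbf{x}\Vert=\Vert A\Vert\Vert\mathbf{x}\Vert$ and $\delta\mathbf{b}$ with $\Vert A^{-1}\delta\mathbf{b}\Vert=\Vert A^{-1}\Vert\Vert\delta\mathbf{b}\Vert$. Both choices are possible in finite dimensions because the unit sphere is compact. So I would prove the bound with $\le$, note its sharpness, and read ``strictly bounded'' as ``bounded above, with $\kappa(A)$ the best constant''. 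I would also exclude the degenerate case $\mathbf{b}=\mathbf{0}$ explicitly.
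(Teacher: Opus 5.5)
Your proof is correct and follows the paper's argument essentially step for step: existence via $\mathbf{x}=A^{-1}\mathbf{b}$, uniqueness via the trivial kernel, Lipschitz stability with constant $\Vert A^{-1}\Vert$, and the relative bound obtained by combining $\Vert\delta\mathbf{x}\Vert\le\Vert A^{-1}\Vert\Vert\delta\mathbf{b}\Vert$ with $\Vert\mathbf{b}\Vert\le\Vert A\Vert\Vert\mathbf{x}\Vert$. Two of your refinements go beyond the paper's proof, which leaves both points unaddressed. First, you exclude $\mathbf{b}=\mathbf{0}$. Second, you observe that the bound is non-strict and attained, so ``strictly bounded'' cannot mean strict inequality; the paper's own derivation likewise only yields $\le$.
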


\begin{proof}
We first show that such systems satisfy the existence criterion. It is a basic result in linear algebra that $A^{-1}$ exists if $\text{det}(A) \neq 0$ (i.e., $A$ is invertible if $\text{det}(A) \neq 0$). Thus, for any $\mathbf{b} \in \mathbb{R}^n$, there exists $\mathbf{x} = A^{-1}\mathbf{b}$. Then by substitution we have $A(A^{-1} \mathbf{b}) = \mathbf{b} \implies (A A^{-1})\mathbf{b} = \mathbf{b} \implies I\mathbf{b} = \mathbf{b} \implies \mathbf{b} = \mathbf{b}$. Thus, a solution exists for all $\mathbf{b} \in \mathbb{R}^n$. \\
\\
We next show satisfaction of the uniqueness criterion. It is another basic result in linear algebra that for any $A \in \mathbb{R}^{n \times n}$, $\det(A) \neq 0$ implies $\ker(A) = \{\mathbf{0}\}$. Let $\mathbf{x}_1, \mathbf{x}_2 \in \mathbb{R}^n$ both be solutions such that $A\mathbf{x}_1 = \mathbf{b}$ and $A\mathbf{x}_2 = \mathbf{b}$. Subtracting the two equations gives $A(\mathbf{x}_1 - \mathbf{x}_2) = \mathbf{0}$. Since $\ker(A) = \{\mathbf{0}\}$, it follows that $\mathbf{x}_1 - \mathbf{x}_2 = \mathbf{0}$, so $\mathbf{x}_1 = \mathbf{x}_2$. \\
\\
We last show satisfaction of the stability criterion. Let $\mathbf{b}$ be perturbed by noise vector $\delta \mathbf{b}$, producing a perturbed solution $\mathbf{x} + \delta \mathbf{x}$ such that $A(\mathbf{x} + \delta \mathbf{x}) = \mathbf{b} + \delta \mathbf{b}$. By linearity, $A \delta \mathbf{x} = \delta \mathbf{b}$, which implies $\delta \mathbf{x} = A^{-1} \delta \mathbf{b}$ since $A$ is invertible. Then by taking the induced norm on both sides of $\mathbf{x} = A^{-1} \delta \mathbf{b}$ we have

\[
\|\delta \mathbf{x}\| = \|A^{-1} \delta \mathbf{b}\| \leq \|A^{-1}\| \|\delta \mathbf{b}\|.
\]

\noindent
Equivalently, the solution operator $T^{-1}: \mathbb{R}^n \to \mathbb{R}^n$ given by $T^{-1}(\mathbf{b}) = A^{-1}\mathbf{b}$ is Lipschitz-continuous with Lipschitz constant $L = \|A^{-1}\|$. As $\|\delta \mathbf{b}\| \to 0$, we have $\|\delta \mathbf{x}\| \to 0$, so such a system is Hadamard stable (in the sense of Lipschitz continuity). \\
\\
Moreover, note that $A\mathbf{x} = \mathbf{b}$ implies $\|\mathbf{b}\| = \|A\mathbf{x}\| \le \|A\| \|\mathbf{x}\|$, yielding $\frac{1}{\|\mathbf{x}\|} \le \frac{\|A\|}{\|\mathbf{b}\|}$. Then we obtain the relative error bound

\begin{equation}
    \frac{\|\delta \mathbf{x}\|}{\|\mathbf{x}\|} \le \|A\| \|A^{-1}\| \frac{\|\delta \mathbf{b}\|}{\|\mathbf{b}\|} = \kappa(A) \frac{\|\delta \mathbf{b}\|}{\|\mathbf{b}\|},
\end{equation}

\noindent
where $\kappa(A) \triangleq \|A\| \|A^{-1}\|$ is the condition number of matrix $A$.
\end{proof}

\noindent
Theorem 1 highlights a critical distinction between categorical ill-posedness and asymptotic ill-posedness (or ill-conditioning) in empirical research:

\begin{itemize}
    \item \textbf{Categorical Failure} ($\det(A) = 0$): When $A$ is singular, the system experiences structural failure. Non-existence corresponds to model mis-specification where no parameter vector $\mathbf{x}$ can generate the observed data $\mathbf{b}$ (i.e., construct invalidity). Non-uniqueness corresponds to a non-trivial kernel ($\ker(A) \neq \{\mathbf{0}\}$), meaning infinitely many parameterizations yield the exact same empirical signature $\mathbf{b}$ (i.e., observational equivalence).

    \item \textbf{Asymptotic Instability} ($\kappa(A) \gg 1$): Crucially, even when $A$ is non-singular ($\det(A) \neq 0$), theoretically guaranteeing existence and uniqueness, the system can remain practically ill-posed if $\kappa(A)$ is sufficiently large. In empirical social science, statistical models are rarely non-invertible in a strict mathematical sense; standard software employs heuristics to return parameter estimates (e.g., the ``no perfect collinearity" assumption in econometrics \cite{Woo13}). However, high multi-collinearity, weak instruments, or over-parameterized specifications drive $\kappa(A) \to \infty$.
\end{itemize}

\noindent
Under these ill-conditioned regimes, the relative error inequality $\frac{\Vert{}\delta \mathbf{x}\Vert{}}{\Vert{}\mathbf{x}\Vert{}} \le \kappa(A) \frac{\Vert{}\delta \mathbf{b}\Vert{}}{\Vert{}\mathbf{b}\Vert{}}$ demonstrates that, in the context of inverse problems, infinitesimal perturbations in empirical inputs $\delta \mathbf{b}$ (e.g., minor sample filtering, variations in survey wording, or choice of control variables) are amplified into catastrophic shifts in substantive parameter estimates $\delta \mathbf{x}$. In this light, practices such as $p$-hacking and specification searching are not merely ethical lapses; they are computational exploits of highly ill-conditioned mappings where researchers effectively manipulate $\delta \mathbf{b}$ until they arrive at a desired $\mathbf{x}$. The central goal of empirical guardrails can thus be mathematically/numerically conceptualized as stabilizing the inverse mapping $T$ by bounding $\kappa(A)$.

\subsection{Hadamard's Criteria \& Modeling Physical Phenomena}

Given Hadamard's focus on mathematical physics (and the mathematical modeling of physical phenomena, broadly speaking), we motivate the contrast between physical baseline systems and social-scientific models by examining steady-state heat conduction and forward numerical weather prediction (NWP). Consider first steady-state one-dimensional heat diffusion in a solid rod of length $L$, governed by Poisson's equation, 

\begin{equation}
    -\frac{d^2 u}{dx^2} = f(x), \quad x \in (0, L),
\end{equation}

\noindent
subject to Dirichlet boundary conditions $u(0) = \alpha$ and $u(L) = \beta$, where $f(x)$ represents heat source terms and $u(x)$ represents the internal temperature distribution. Because the differential operator is strictly elliptic and the Dirichlet boundary conditions are fully specified, the mapping from boundary/source data to internal state $u(x)$ is provably well-posed in the sense of Hadamard: a unique solution exists, and continuous dependence is guaranteed by maximum principles. \\
\\
Climate systems are notoriously chaotic, hyper-sensitive to initial conditions, and computationally ill-conditioned, thus not inherently adherent to Hadamard's criteria \cite{LT16}. Nonetheless, even when scaling to high-dimensional fluid systems, such as short-range weather prediction governed by the shallow-water equations (SWE), forward models in climate science are constructed to preserve continuous dependence over bounded time horizons $[0, T]$ (e.g., \cite{San+05} for air pollution modeling). Well-posedness in these physical settings is anchored by fundamental invariants (e.g., conservation of mass, energy, and momentum). Physical laws act as natural regularizers, constraining the allowable state space and preventing arbitrary drift. Such conservation laws do not exist for social phenomena, entailing what we describe as an ``epistemic gap" between the natural and social sciences: \\

\noindent\fbox{%
  \begin{minipage}{\textwidth}
    \textbf{The Epistemic Gap:} Physical systems derive well-posedness from non-negotiable physical laws (e.g., thermodynamics, mass conservation). Social-scientific systems lack these natural conservation laws. Consequently, the mapping $T: \mathcal{E} \to \mathcal{M}$ from empirical observations to theoretical claims is \textit{natively ill-posed}. Without ex-ante structural constraints, social science models lack the natural regularizers that prevent instability and observational equivalence.
  \end{minipage}%
} \\

\noindent
This comparison clarifies our purpose of importing the Hadamard criteria in social inquiry. Because human social systems do not possess the equivalent of physical conservation laws to guarantee well-posedness, researchers must introduce what are effectively artificial, cross-paradigmatic regularizers, such as pre-registered hypotheses, boundary conditions, and multiverse audits, to impose structural stability on an otherwise elastic inferential state space.

\section{Ill-Posed Problems in the Social Sciences}

We turn our attention to the analysis of social phenomena in light of Hadamard's criteria. In particular, we examine case studies in econometric modeling in economics, survey research and psychometrics from experimental psychology, and ethnography from socio-cultural and political anthropology. We show how these case studies relate to three failure modes with respect to Hadamard's criteria, i.e., non-existence, non-uniqueness, and instability.

\subsection{Non-Existence: Construct Invalidity \& Structural Under-Identification}

Hadamard non-existence occurs when an empirical observation $e \in \mathcal{E}$ falls outside the image of the forward theoretical operator, i.e., $e \notin F(\mathcal{M})$. Under these conditions, the inverse mapping $T(e)$ is mathematically or conceptually undefined. In empirical social science, non-existence arises when the parameters or constructs a researcher seeks to infer cannot be reconstructed from the observed data, either because the system of equations lacks sufficient identifying constraints, or because the theoretical constructs themselves possess no coherent generative counterpart in empirical reality. \\
\\
Simultaneous equation models (SEM's) are frequently used in econometrics (e.g., \cite{Sal83}). Suppose we have an SEM designed to estimate the structural demand and supply relationships for a given commodity. Let $q_i$ and $p_i$ denote the observed equilibrium quantity and price for observation $i$. Consider structural simultaneous equations for a hypothetical model given by

\begin{align*}
    \text{Demand: } q_i + \beta_{12} p_i + \gamma_{11} z_{1i} = u_{1i}, \\
    \text{Supply: } q_i + \beta_{22} p_i + \gamma_{21} z_{1i} = u_{2i},
\end{align*}

\noindent
where $z_{1i}$ is an observed exogenous variable (e.g., consumer income), and $u_{1i}, u_{2i}$ are structural error terms with $\mathbb{E}[u_{1i}\vert{}z_{1i}] = \mathbb{E}[u_{2i}\vert{}z_{1i}] = 0$. To estimate the structural parameter matrix $B = \begin{bmatrix} 1 & \beta_{12} \\ 1 & \beta_{22} \end{bmatrix}$ and $\Gamma = \begin{bmatrix} \gamma_{11} \\ \gamma_{21} \end{bmatrix}$, the econometrician computes the reduced-form system by solving for the endogenous variables $(q_i, p_i)$ strictly in terms of the exogenous variable $z_{1i}$, yielding

\begin{align*}
q_i = \pi_{11}z_{1i} + v_{1i}, \\
p_i = \pi_{21}z_{1i} + v_{2i}.
\end{align*}

\noindent
The reduced-form coefficients $\Pi = \begin{bmatrix} \pi_{11} \\ \pi_{21} \end{bmatrix}$ are directly estimable from data using techniques such as Ordinary Least Squares (OLS). However, the mapping $T: \Pi \to (B, \Gamma)$ requires recovering four structural parameters ($\beta_{12}, \beta_{22}, \gamma_{11}, \gamma_{21}$) from only two reduced-form estimates ($\pi_{11}, \pi_{21}$). The order condition for identification requires that the number of excluded exogenous variables in an equation be at least as great as the number of included endogenous variables minus one. Here, because $z_{1i}$ enters both equations identically, zero exogenous variables are excluded from either equation. The rank of the identification matrix is 0, thus failing the rank condition $\text{rank}(\Pi_2) \ge G - 1$. The mapping from reduced-form parameters $\Pi \in \mathcal{E}$ to structural parameters $(B, \Gamma) \in \mathcal{M}$ fails to satisfy Hadamard's existence criterion because the system is under-identified. An infinite continuum of structural parameter combinations can generate the exact same reduced-form data. Without introducing an exogenous supply or demand shifter (a true instrumental variable $z_{2i}$ excluded from one equation), the structural demand elasticity $\beta_{12}$ does not exist in the domain of the solvable parameter space. \\
\\
Psychometrics has its origins as far back as the 19th century \cite{WBA21}, providing the promise of leveraging statistical rigor to capture elusive mental and cognitive processes. However, as we argue, psychometric frameworks are not immune to ill-posedness. In psychometrics, non-existence manifests when a latent variable model attempts to map observed test or survey scores into a hypothesized psychological construct space $\mathcal{M}$, but the empirical covariance matrix $\Sigma \in \mathcal{E}$ cannot be generated by any valid covariance structure. Confirmatory factor analysis (CFA) is commonly used in psychometric research (e.g., \cite{GA23}). Consider a CFA model that maps $p$ observed survey items $\mathbf{y}$ to $k$ latent constructs $\boldsymbol{\eta}$,

\begin{equation}
\mathbf{y} = \boldsymbol{\nu} + \Lambda \boldsymbol{\eta} + \boldsymbol{\varepsilon},
\end{equation}

\noindent
where $\Lambda \in \mathbb{R}^{p \times k}$ is the factor loading matrix, $\boldsymbol{\eta} \sim \mathcal{N}(\mathbf{0}, \Phi)$ represents the latent traits, and $\boldsymbol{\varepsilon} \sim \mathcal{N}(\mathbf{0}, \Psi)$ represents unique item errors, with $\Psi = \text{diag}(\psi_1, \psi_2, \dots, \psi_p)$. The theoretical covariance matrix modeled by the parameters $\theta = (\Lambda, \Phi, \Psi)$ is

\begin{equation}
\Sigma(\theta) = \Lambda \Phi \Lambda^T + \Psi.
\end{equation}

\noindent
The inverse problem entails finding a parameter vector $\hat{\theta} \in \mathcal{M}$ such that the sample covariance matrix $S \in \mathcal{E}$ equals $\Sigma(\hat{\theta})$, which can be achieved via methods such as maximum likelihood estimation (MLE). However, for psychometric tasks, such as measuring latent ``grit" or ``emotional intelligence" using collinear or poorly phrased items on questionnaires, methods such as MLE may converge to an unconstrained point where one or more error variances are negative ($\hat{\psi}_i < 0$), i.e., what is known as a \textit{Heywood case}. Moreover, the optimization method may yield correlation estimates between distinct latent factors exceeding unity ($\hat{\rho}_{\eta_1, \eta_2} > 1.0$). Mathematically, CFA psychometric models in this vein yield negative variance ($\hat{\psi}_i < 0$) or a non-positive semi-definite correlation matrix $\Phi \notin \mathcal{S}_+^k$, which violate the axiomatic definitions of probability theory and vector spaces. If we restrict the parameter space $\mathcal{M}$ to admissible, physically possible parameters ($\psi_i \ge 0, \Phi \succeq 0$), then $S \notin \Sigma(\mathcal{M})$. In this scenario, no valid point in construct space $\mathcal{M}$ exists that can generate the empirical data $S$. The estimated construct collapses because the survey instrument captures method artifacts or response noise, rather than a coherent latent psychological trait as intended. \\
\\
Anthropology, particularly socio-cultural anthropology, has undergone a significant paradigm shift, moving away from positioning itself as a descriptive science and towards an interpretive, literary framework, exemplified by the ``Writing Culture" movement emerging in the 1980's \cite{MC86}, as well as methodologies such as ``thick description" as introduced by C. Geertz (1926-2006) \cite{Gee73}. By virtue of this interpretive shift, socio-cultural anthropologists, when conducting ethnographies, have wide latitude to employ their own preferred interpretive framework onto fieldwork. With respect to Hadamard's criteria, non-existence arises when an investigator projects an ex-ante theoretical taxonomic framework $\mathcal{M}$ onto a social site, only for field observations $e \in \mathcal{E}$ to reveal that the foundational social categories assumed by $\mathcal{M}$ have no local generative presence. \\
\\
One example of such projection stems from the work of A.R. Radcliffe-Brown (1881-1955). Radcliffe-Brown pioneered the \textit{structural functionalism} paradigm in anthropology, i.e., the framework of analyzing social structure by framing it as a concrete, observable network of relations between individuals, which can be elucidated via the methodology of mapping kinship using standardized genealogical grids \cite{RB52}. In terms of our formalism, under such a structural-functionalist framework, there exists $\mathcal{M}_{\text{kinship}}$ positing that all human societies organize rights, property transmission, and social duties through universal, descent-based genealogical ties (e.g., patrilineal vs. matrilineal corporate descent groups). However, Radcliffe-Brown's framework was shown to lack its seemingly universal utility. In his landmark field studies of Yap islanders (from what is today the Federated States of Micronesia), anthropologist D. Schneider (1918-95) demonstrated that the theoretical mapping $T: \mathcal{E}_{\text{Yap}} \to \mathcal{M}_{\text{kinship}}$ was fundamentally non-existent in Yap culture; Yapese relationality was organized around \textit{tabinau} (i.e., the Yapese practice of land units and rights acquired through labor and caregiving obligations), rather than biological/genealogical ties (\textit{tabinau} was not a ``lineage" in the genealogical sense) \cite{Hel97}. Forcing Yapese actions into Western genealogical charts in the structural functional vein required the investigator to commit to an ad-hoc/post-hoc rationalization of labeling performative care as ``fictive kinship" (thereby preserving the theory's explanatory power). Schneider's fieldwork showed that ``kinship" as an independent, bounded analytical domain did not exist in Yapese social reality. Formally, if $\mathcal{M}_{\text{kinship}}$ assumes that biological genealogical ties are the primary independent variable organizing social structure, but local action $e \in \mathcal{E}$ is generated entirely by land-tenure relationships and reciprocal feeding practices (\textit{tabinau} in Yapese culture), then the image $F(\mathcal{M}_{\text{kinship}})$ cannot cover $e$. The mapping violates the Hadamard existence criterion, since the anthropologist attempting to code local interactions into corporate descent groups using the structural-functionalist framework is measuring artifacts of their own theoretical taxonomy rather than an empirically observable social structure. \\
\\
Such examples also exist in political anthropology. M. Weber (1864-1920), widely recognized as one of the principal founding fathers of sociology, posited that the state is ``a human community that successfully claims the monopoly of the legitimate use of physical force within a given territory" \cite{Mit11}. This Weberian definition remains highly influential in socio-cultural anthropology. However, a parallel non-existence failure in the sense of Hadamard occurs when anthropologists (and political scientists) evaluate (post-conflict) societies strictly under this Western Weberian framework. Such a framework defines state capacity through a centralized, bureaucratic hierarchy possessing a monopoly on the legitimate use of physical force within a given territory (denoted $\mathcal{M}_{\text{Weber}}$ by our formalism). But like Radcliffe-Brown's structural-functionalist framework for social structures, Weber's top-down, centralized security framework is not necessarily universal. \cite{Hen+24}, through their analysis of 106 villages across the province of North-Kivu in the Democratic Republic of the Congo (DRC), found that local governance is executed through fluid, overlapping, and highly localized authority networks primarily legitimized by spiritual and ethnic ties. If an anthropologist were to try to map local field observations $e \in \mathcal{E}$ (e.g., dispute resolution by village elders, security tax levies by local militias) into formal institutional indices measuring ``rule of law" or ``bureaucratic capacity" in the Weberian sense, then the mapping violates the Hadamard existence criterion. An ad-hoc/post-hoc rationalization from a Weberian standpoint may label such empirical observations as defective or incomplete machinations of a top-down, bureaucratic security apparatus (again to preserve the theory's explanatory power), but such a rationalization misses the picture. Instead, the findings from the DRC ethnography evince an entirely different class of governance mechanisms. Forcing these observations into variables such as ``bureaucratic autonomy" or ``a monopoly of violence" yields an empty mapping where the analytical framework fails to register the actual mechanisms undergirding the social order.

\subsection{Non-Uniqueness: Observational Equivalence \& Theoretical Elasticity}

Hadamard non-uniqueness occurs when the inverse operator $T$ is non-injective, i.e., multiple distinct parameter vectors or theoretical frameworks $m_1, m_2 \in \mathcal{M}$ (where $m_1 \neq m_2$) yield the exact same empirical realization $e \in \mathcal{E}$. \\
\\
Behavioral economics emerged as a distinct sub-discipline and has garnered increased attention since the 1970's \cite{KMZ06}, hoping to enhance economic models with an understanding of micro-level, individual economic behavior beyond relatively simplistic assumptions (e.g., \textit{homo economicus}). We again motivate our discussion of Hadmard's criteria for quantitative disciplines by looking at experimental psychology and economics. In experimental psychology and behavioral economics, non-uniqueness manifests as \textit{observational equivalence}, where radically different cognitive mechanisms produce identical statistical distributions of observable behavior. \\
\\
Behavioral economics is relatively unique as a sub-discipline of economics by virtue of its recurring use of experiments. \textit{Risky choice experiments} are a common experimental design method in behavioral economics (e.g., \cite{Hei+23}). Consider a standard risky-choice experimental setup. A subject evaluates a sequence of $N$ discrete choices between paired gambles $A_i = (p_i, x_i; 1-p_i, 0)$ and $B_i = (q_i, y_i; 1-q_i, 0)$, where $p_i, q_i$ denote probabilities and $x_i, y_i$ denote monetary payoffs. The observed data vector $e \in \mathcal{E}$ consists of the empirical choice frequencies $\hat{P}(A_i)$ and mean response times $\bar{t}_i$ across all $N$ lottery pairs, given as

\begin{equation}
e = \left\{ \big(\hat{P}(A_i), \bar{t}_i\big) \right\}_{i=1}^N \in [0,1]^N \times \mathbb{R}_+^N.
\end{equation}

\noindent
We consider two distinct models in behavioral economics. Cumulative Prospect Theory (CPT), introduced by A. Tversky (1937-96) and D. Kahneman (1934-2024) \cite{TK92}, is a highly popular framework in behavioral economics. Let Cumulative Prospect Theory with Stochastic Choice Noise be our first model under consideration ($m_1 \in \mathcal{M}_1$). In CPT, a subject maximizes a continuous, non-linear subjective utility function. The value of outcome $x$ is $v(x) = x^\alpha$, and probabilities are weighted via non-linear probability weighting $w(p) = \frac{p^\gamma}{(p^\gamma + (1-p)^\gamma)^{1/\gamma}}$. The deterministic utility differential is

\begin{equation}
\Delta V_i(\theta_1) = w(p_i) v(x_i) - w(q_i) v(y_i), \quad \text{where } \theta_1 = (\alpha, \gamma) \in (0,1]^2.
\end{equation}

\noindent
Choice variability is modeled by passing $\Delta V_i$ through a logistic (softmax) link function with precision parameter $\lambda$, i.e., 

\begin{equation}
P_{m_1}(A_i \mid \theta_1, \lambda) = \frac{1}{1 + \exp\left(-\lambda \Delta V_i(\theta_1)\right)}.
\end{equation}

\noindent
Our other model is the Priority Heuristic (PH) \cite{Kat24} ($m_2 \in \mathcal{M}_2)$. Under PH, the subject executes a non-compensatory, non-optimizing lexicographic decision rule that completely avoids trading off probabilities against outcomes. PH can be described algorithmically as follows:

\begin{itemize}
    \item \textbf{Step 1 (Minimum Gain)}: Compare minimum payoffs. If $y_{\min} - x_{\min} \ge \Delta_x$ (where threshold $\Delta_x = 0.10 \cdot \max(x, y)$), stop and choose the gamble with the higher minimum payoff.

    \item \textbf{Step 2 (Probability of Minimum Gain)}: If Step 1 is inconclusive, compare probabilities of minimum outcomes. If $q_{\min} - p_{\min} \ge \Delta_p = 0.10$, stop and choose the gamble with the lower probability of minimum gain.

    \item \textbf{Step 3 (Maximum Gain)}: If Step 2 is inconclusive, choose the gamble with the higher maximum payoff.
\end{itemize}

\noindent
To accommodate execution error in experimental settings, the deterministic priority rule is coupled with a uniform execution noise parameter $\epsilon \in (0, 0.5)$, yielding predicted choice probabilities

\begin{equation}
P_{m_2}(A_i \mid \epsilon) = (1 - \epsilon) \mathbb{I}(\text{PH selects } A_i) + \epsilon \mathbb{I}(\text{PH selects } B_i).
\end{equation}

\noindent
Let $L(m \mid e)$ denote the log-likelihood of observing empirical data $e$ under candidate model $m$. Using standard benchmark lottery datasets, it can be shown that the maximum log-likelihoods achieved by both models are structurally indistinguishable, i.e.,

\begin{equation}
\max_{\theta_1, \lambda} \ln L(m_1(\theta_1, \lambda) \mid e) \approx \max_{\epsilon} \ln L(m_2(\epsilon) \mid e).
\end{equation}

\noindent
Because the inverse operator $T: \mathcal{E} \to \mathcal{M}$ maps the observed empirical choice distribution $e$ to two completely disjoint parameter spaces, one assuming continuous non-linear trade-offs ($\mathcal{M}_1$) and the other assuming absolute, non-compensatory stopping rules ($\mathcal{M}_2$), the system violates the Hadamard uniqueness criterion, i.e., $T(e) = \{m_1^*, m_2^*\}$ with $m_1^* \in \mathcal{M}_1, \, m_2^* \in \mathcal{M}_2$ and $m_1^* \neq m_2^*$. Consequently, in practice, behavioral economists rely on response-time distributions to differentiate such models \cite{HCC23}. \\
\\
The interpretive flexibility of socio-cultural anthropology makes it particularly susceptible to violating the Hadamard uniqueness criterion. In qualitative socio-cultural anthropology, non-uniqueness manifests as \textit{theoretical elasticity}, a property that can render interpretive frameworks (mostly) unfalsifiable (in a strict Popperian sense). Here, the theoretical operator $\mathcal{M}$ is sufficiently elastic to absorb mutually exclusive field observations $e, e' \in \mathcal{E}$ without requiring the researcher to alter their substantive theoretical claims. \\
\\
The Mead-Freeman debate is an important episode in the history of anthropology \cite{Lea88}. M. Mead (1901-78) famously conducted an ethnography of adolescent social organization, sexual behavior, and stress in early 20th-century American Samoa \cite{Mea28}. From her ethnography in Samoa, Mead argued that human nature is highly malleable, and that the adolescent ``storm and stress" notion documented in Western developmental psychology \cite{Buc+23} is culturally rather than biologically mediated. Decades later, the anthropologist Derek Freeman likewise published an ethnography in Samoa \cite{Fre83} that was deeply critical of Mead's findings. In contrast, Freeman argued that adolescence is inherently characterized by biological, hormonal, and evolutionary stress, governed by strict social hierarchies and behavioral control. \\
\\
Let $\mathcal{E}$ denote the empirical data space comprised of observational field notes, informant interviews, and cultural artifact records regarding adolescent life in Samoa. Likewise, let $m_1 \in \mathcal{M}_1$ denote the cultural-deterministic view posited by Mead and $m_2 \in \mathcal{M}_2$ denote the evolutionary socio-biological view posited by Freeman. In this regard, Mead's framework employs $T_{m_1}(e)$, which interprets Samoan adolescent behavior through the lens of cultural ease via observational evidence of casual sexual experimentation among youth and the perception of Samoa as a idyllic, low-stress society where cultural norms prevent adolescent turmoil. On the other hand, Freeman's framework employs $T_{m_2}(e)$, which interprets the same Samoan adolescent behavior through the lens of rigid virginity cults, high rates of adolescent aggression, and severe punishments for norm violations. \\
\\
The Mead-Freeman debate illustrates two distinct forms of qualitative non-uniqueness. One form is observational equivalence across ethnographies (i.e., $T(e_1) \neq T(e_2)$). Mead and Freeman gathered data from the same island culture, yet extracted inverse conclusions. Because the qualitative ``measurement operator" (the ethnographer's positionality, choice of informants, and interpretive lens) was unconstrained by ex-ante falsification criteria, the empirical site $\mathcal{E}$ mapped onto two mutually exclusive claims in theoretical space: $m_1 = \text{``Low-stress cultural plasticity"}$ vs. $m_2 = \text{``High-stress biological restriction"}$. The other form is theoretical elasticity within a single framework. When presented with field observations that conflicted with her primary thesis, such as strict Samoan ritual virginity requirements for \textit{taupou} (the ceremonial village maiden or high chief's daughter), Mead's framework proved elastic enough to absorb the contradiction. The strict norms surrounding the \textit{taupou} was treated not as counter-evidence to Mead's ``relaxed sexuality" thesis, but as a minor, isolated ceremonial exception that proved the general ease of everyday social life in Samoa. Thus, even disconfirming evidence could be reinterpreted within the framework as supporting evidence (paralleling Popper's observation  that Freudian psychoanalysis can interpret mutually exclusive clinical observations as supporting the same inference). Because the analytical mapping contained no explicit boundary conditions, i.e., pre-declared empirical observations $e^* \in \mathcal{E}$ that would force the rejection of $m_1$, the theoretical framework could accommodate both permissive behavior ($e_{\text{permissive}}$) and restrictive behavior ($e_{\text{restrictive}}$) while arriving at the same substantive theoretical conclusion ($m_1$).

\subsection{Instability: Specification Fragility \& Hyper-Subjectivity}

Hadamard instability occurs when the inverse operator $T$ lacks continuous dependence on the data, i.e., an infinitesimal perturbation in the input space ($\delta e \to 0$) induces a non-negligible, discontinuous jump in the inferred output space ($\delta m \gg 0$). As we derived in Theorem 1 from $\S$2.1, in numerical analysis, this is an ill-conditioned system ($\kappa(A) \gg 1$). Hadamard stability requires that the solution map $T: \mathcal{E} \to \mathcal{M}$ be continuous: small perturbations in the empirical input ($\Vert{}\delta e\Vert{}_\mathcal{E} \to 0$) must induce correspondingly small perturbations in the theoretical parameter output ($\Vert{}\delta m\Vert{}_\mathcal{M} \to 0$). In the canonical linear system $A\mathbf{x} = \mathbf{b}$, relative error amplification is strictly bounded by the condition number $\kappa(A) = \Vert{}A\Vert{} \Vert{}A^{-1}\Vert{}$. When a social-scientific design is practically ill-conditioned ($\kappa(A) \gg 1$), infinitesimal, defensible variations in data pre-processing or observer perspective act as high-frequency noise inputs ($\delta e$). These minor perturbations are markedly magnified through the inferential operator, causing catastrophic shifts in substantive conclusions ($\delta m$). \\
\\
``Social priming" results from experimental psychology are the poster child for the replication crisis in the field \cite{SR21} and serve as excellent case studies for the ``garden of forking paths" statistical phenomenon noted by statisticians Andrew Gelman and Eric Loken (cf. \cite{KR19} for an excellent discussion of the phenomenon and its implications for statistical/machine learning). In experimental psychology, instability manifests when a substantive claim depends entirely on undisclosed, arbitrary data-filtering decisions executed during data cleaning, transformation, and model specification. Consider an experimental setup designed to test non-conscious behavioral priming (e.g., measuring whether exposing subjects to implicit age-related semantic cues alters subsequent physical walking velocity). Let the raw experimental output collected from $N$ subjects be represented as a tuple

\begin{equation}
\mathbf{D}_{\text{raw}} = \left\{ (y_{i,t}, \mathbf{x}_i, d_i) \right\}_{i=1}^N \in \mathcal{E}_{\text{raw}},
\end{equation}

\noindent
where $y_{i,t} \in \mathbb{R}_+$ denotes the raw millisecond trial response latencies for subject $i$, $\mathbf{x}_i \in \mathbb{R}^k$ represents a vector of individual-level demographic and session covariates, and $d_i \in \{0, 1\}$ represents the binary treatment assignment arm (primed vs. control). Before estimating the treatment effect parameter $\beta \in \mathcal{M}$, the raw data must pass through a data pre-processing pipeline $P_\sigma: \mathcal{E}_{\text{raw}} \to \mathcal{E}_{\text{clean}}$, parameterized by a vector of discrete researcher choices $\sigma \in \mathcal{S}$. The total specification space $\mathcal{S} = \mathcal{S}_1 \times \mathcal{S}_2 \times \mathcal{S}_3 \times \mathcal{S}_4$ constitutes a combinatorial ``garden of forking paths" via the data pre-processing pipeline:

\begin{enumerate}
    \item \textbf{Outlier Exclusion Criteria ($\sigma_1 \in \mathcal{S}_1$)}: Trimming individual trial latencies outside symmetric standard deviation bounds around the mean ($\sigma_1 \in \{\text{No trim}, \pm 2.0\text{SD}, \pm 2.5\text{SD}, \pm 3.0\text{SD}\}$) or imposing absolute latency cutoffs ($\sigma_1' \in \{[200\text{ms}, 2000\text{ms}], [300\text{ms}, 3000\text{ms}]\}$).

    \item \textbf{Data Transformation Operators ($\sigma_2 \in \mathcal{S}_2$)}: Transforming trial-level or subject-mean latencies to approximate normality:$$g_{\sigma_2}(y) = \begin{cases}     y & \text{raw scale} \\     \ln(y) & \text{logarithmic scale} \\     y^{-1} & \text{inverse velocity scale}     \end{cases}$$

    \item \textbf{Sub-Sample Filtering Rules} ($\sigma_3 \in \mathcal{S}_3$): Excluding subjects based on post-experiment debriefing criteria, such as dropping participants who score above an arbitrary threshold on suspicion-checking questionnaires ($\sigma_3 \in \{\text{Retain all}, \text{Exclude suspicious}, \text{Exclude extreme errors}\}$).

    \item \textbf{Covariate Adjustment Matrix ($\sigma_4 \in \mathcal{S}_4$)}: Including or omitting auxiliary controls in the estimation equation:$$\mathbf{X}_{\sigma_4} \in \left\{ \mathbf{1}_N, \, [\mathbf{1}_N, \mathbf{x}_{\text{age}}], \, [\mathbf{1}_N, \mathbf{x}_{\text{age}}, \mathbf{x}_{\text{fatigue}}], \, [\mathbf{1}_N, \mathbf{x}_{\text{age}}, \mathbf{x}_{\text{fatigue}}, \mathbf{x}_{\text{time\_of\_day}}] \right\}$$
\end{enumerate}

\noindent
For any specific choice combination $\sigma = (\sigma_1, \sigma_2, \sigma_3, \sigma_4) \in \mathcal{S}$, the processed data matrix $\mathbf{Z}_\sigma = [\mathbf{d}_\sigma, \mathbf{X}_\sigma] \in \mathbb{R}^{M_\sigma \times (1+k_\sigma)}$ and transformed outcome vector $\mathbf{y}_\sigma = g_{\sigma_2}(\mathbf{y}_{\sigma_1, \sigma_3}) \in \mathbb{R}^{M_\sigma}$ define a local ordinary least squares (OLS) mapping

\begin{equation}
\hat{\boldsymbol{\theta}}_\sigma = \begin{pmatrix} \hat{\beta}_\sigma \\ \hat{\boldsymbol{\gamma}}_\sigma \end{pmatrix} = (\mathbf{Z}_\sigma^T \mathbf{Z}_\sigma)^{-1} \mathbf{Z}_\sigma^T \mathbf{y}_\sigma.
\end{equation}

\noindent
The primary parameter of interest is the scalar treatment effect $\hat{\beta}_\sigma$, with associated test statistic $t_\sigma = \frac{\hat{\beta}_\sigma}{\text{SE}(\hat{\beta}_\sigma)}$ and $p$-value $p_\sigma = 2(1 - \Phi(\vert{}t_\sigma\vert{}))$. Let the condition number of the design matrix under specification $\sigma$ be $\kappa(\mathbf{Z}_\sigma) = \Vert{}\mathbf{Z}_\sigma\Vert{} \Vert{}\mathbf{Z}_\sigma^+\Vert{}$. In small-sample behavioral priming paradigms where the true effect size is near zero ($\beta_{\text{true}} \approx 0$) and the treatment indicator $\mathbf{d}$ is collinear with unobserved or omitted session-level noise contained within $\mathbf{X}_\sigma$, the matrix $(\mathbf{Z}_\sigma^T \mathbf{Z}_\sigma)$ becomes ill-conditioned, i.e., $\kappa(\mathbf{Z}_\sigma) \gg 1$. Under high ill-conditioning, small perturbations in the pre-processing choices are equivalent to adding high-frequency noise inputs $\delta \mathbf{y}_\sigma = \mathbf{y}_{\sigma'} - \mathbf{y}_\sigma$ or design alterations $\delta \mathbf{Z}_\sigma = \mathbf{Z}_{\sigma'} - \mathbf{Z}_\sigma$, which magnify the variance of the parameter estimate

\begin{equation}
\frac{\Vert{}\hat{\boldsymbol{\theta}}_{\sigma'} - \hat{\boldsymbol{\theta}}_\sigma\Vert{}}{\Vert{}\hat{\boldsymbol{\theta}}_\sigma\Vert{}} \le \kappa(\mathbf{Z}_\sigma) \left( \frac{\Vert{}\delta \mathbf{Z}_\sigma\Vert{}}{\Vert{}\mathbf{Z}_\sigma\Vert{}} + \frac{\Vert{}\delta \mathbf{y}_\sigma\Vert{}}{\Vert{}\mathbf{y}_\sigma\Vert{}} \right).
\end{equation}

\noindent
Evaluating the full ``multiverse" of $K = \vert{}\mathcal{S}\vert{}$ specifications generates a distribution of inferential results across the candidate pipeline space

\begin{equation}
\mathcal{D}_{\mathcal{S}} = \left\{ \big(\hat{\beta}_\sigma, p_\sigma\big) \right\}_{\sigma \in \mathcal{S}}.
\end{equation}

\noindent
In well-conditioned systems, variations across $\mathcal{S}$ yield tight, unimodal parameter distributions centered near the true value. But in this sort of social priming experimental setup, the mapping $T: \mathcal{S} \to \mathcal{M}$ exhibits severe, jump-discontinuous behavior across the specification surface:

\begin{itemize}
    \item \textbf{Path $\sigma_A$ (The $p$-hacked specification)}: Selecting $\sigma_1 = \pm 2.5\text{SD}$ trimming, $\sigma_2 = \text{log transform}$, $\sigma_3 = \text{exclude suspicious}$, and $\sigma_4 = \text{full controls}$ yields:$$\hat{\beta}_{\sigma_A} = +0.48, \quad \text{SE}(\hat{\beta}_{\sigma_A}) = 0.17, \quad t_{\sigma_A} = 2.82, \quad p_{\sigma_A} = 0.005$$The researcher maps this result to substantive claim $m_{\text{priming}} \in \mathcal{M}$ (``Unconscious semantic cues robustly alter motor speed").

    \item \textbf{Path $\sigma_B$ (An equally defensible neighboring specification)}: Shifting the trimming boundary slightly to $\sigma_1 = \pm 3.0\text{SD}$ while maintaining raw latency scales ($\sigma_2 = \text{raw}$) and omitting fatigue controls ($\sigma_4 = \mathbf{1}_N$) yields:$$\hat{\beta}_{\sigma_B} = +0.06, \quad \text{SE}(\hat{\beta}_{\sigma_B}) = 0.14, \quad t_{\sigma_B} = 0.43, \quad p_{\sigma_B} = 0.67$$The researcher maps this result to substantive claim $m_{\text{null}} \in \mathcal{M}$ (``No detectable priming effect").
\end{itemize}

\noindent
Because $\Vert{}\sigma_A - \sigma_B\Vert{} \to 0$ in the metric space of analytical choices, but the inferential decision rule maps to structurally contradictory claims ($\Vert{}m_{\text{priming}} - m_{\text{null}}\Vert{}_\mathcal{M} \gg 0$), the pipeline violates continuous dependence, i.e., 

\begin{equation}
\lim_{\sigma \to \sigma_0} T(\mathbf{D}_{\text{raw}}, \sigma) \neq T(\mathbf{D}_{\text{raw}}, \sigma_0).
\end{equation}

\noindent
The mapping $T$ (the data pipeline) acting on the raw data \(\mathbf{D}_{\text{raw}}\) is discontinuous at the point $\sigma_0$. Thus, the mapping violates the Hadamard stability criterion. Undisclosed researcher degrees of freedom (such as $p$-hacking and selective reporting) are the strategic exploitation of this underlying operator instability, where researchers query the multiverse $\mathcal{S}$ until finding a point $\sigma^*$ where $p_{\sigma^*} < \alpha$. \\
\\
We again turn to political anthropology to underscore how such specification fragility can likewise manifest in qualitative methodologies, particularly in the form of a network topological framework, which are used in some anthropological research \cite{Wol78}. Here, a parallel failure of Hadamard stability occurs when an analytical narrative depends heavily on un-triangulated key informant networks. Under these conditions, the qualitative inferential mapping becomes hyper-sensitive to field access perturbations. \\
\\
Consider a hypothetical example in which an anthropologist is tasked with conducting an ethnographic study investigating judicial independence during an authoritarian regime transition in some country. The objective is to infer the true institutional posture of the high court ($\mathcal{M}$): did the judiciary act as a strategic institutional resister protecting the rights of citizens ($m_{\text{resist}}$), or as an opportunistic regime accomplice legitimizing executive overreach ($m_{\text{collude}}$)? Let the social field site be modeled as a directed information network (graph) $\mathcal{G} = (\mathcal{V}, \mathcal{E}_{\text{net}})$, where $\mathcal{V}$ represents the set of potential elite actors (judges, prosecutors, court clerks, regime liaisons) and $\mathcal{E}_{\text{net}}$ represents interpersonal referral paths. The qualitative corpus collected by the ethnographer is generated via a chain-referral (``snowball") sampling operator $S_k(v_0)$, initiated from a seed informant $v_0 \in \mathcal{V}$ and executed up to depth $k$, i.e., 

\begin{equation}
\mathbf{D}_{\text{interview}}(v_0) = \bigcup_{i=0}^k \left\{ \text{Transcript}(v_i) \mid v_i \in \text{Neighborhood}(v_{i-1}) \right\} \in \mathcal{E}_{\text{qual}}.
\end{equation}

\noindent
The qualitative inferential mapping $T_{\text{qual}}: \mathcal{E}_{\text{qual}} \to \mathcal{M}$ processes this text corpus to construct a substantive narrative $m \in \mathcal{M}$. Elite actors operate within professional silos and possess strong incentives to construct retroactively coherent, self-serving institutional histories. Thus, the referral network $\mathcal{G}$ is partitioned into tightly connected, mutually antagonistic sub-graphs

\begin{equation}
\mathcal{V} = \mathcal{V}_{\text{reform}} \cup \mathcal{V}_{\text{regime}}, \quad \text{where } \mathcal{V}_{\text{reform}} \cap \mathcal{V}_{\text{regime}} \approx \emptyset.
\end{equation}

\noindent
Small, unpredictable variations in field conditions, such as a seed informant’s availability, initial introduction channels, or scheduling conflicts, can act as an access perturbation $\delta v_0$ in the selection operator:

\begin{itemize}
    \item \textbf{Seed Path $\mathcal{S}_A$ ($v_0 \in \mathcal{V}_{\text{reform}}$)}: Starting from a retired reformist judge yields a sample path restricted to reformist clerks, defense attorneys, and human rights advocates ($\mathbf{D}_{\text{sample A}}$)
    \item \textbf{Seed Path $\mathcal{S}_B$ ($v_0' = v_0 + \delta v_0 \in \mathcal{V}_{\text{regime}}$)}: A minor perturbation, such as the reformist judge canceling the interview, forcing the researcher to enter through a court administrative clerk, diverts the chain into executive liaisons, state prosecutors, and regime loyalists ($\mathbf{D}_{\text{sample B}}$).
\end{itemize}

\noindent
Evaluating the qualitative inverse mapping $T_{\text{qual}}$ across these perturbed interview sets reveals severe operator instability, i.e., 

\[
T_{\text{qual}}(\mathbf{D}_{\text{sample A}}) = m_{\text{resist}} \in \mathcal{M} \quad \text{versus} \quad T_{\text{qual}}(\mathbf{D}_{\text{sample B}}) = m_{\text{collude}} \in \mathcal{M}.
\]

\noindent
Let $d_{\mathcal{E}}(\mathbf{D}_A, \mathbf{D}_B)$ be a metric measuring dissimilarity between textual corpora (e.g., semantic distance over thematic codings). In non-triangulated interview designs where the researcher treats elite self-reports as direct representations of institutional reality, and if the textual corpora between the reformist and regime loyalist informants are insufficiently distinct (thus $d_{\mathcal{E}}(\mathbf{D}_A, \mathbf{D}_B) \rightarrow 0$), then the operator condition number $\kappa(T_{\text{qual}})$ diverges, i.e., 

\begin{equation}
\kappa(T_{\text{qual}}) = \sup_{\mathbf{D}_A \neq \mathbf{D}_B} \frac{\Vert{}T_{\text{qual}}(\mathbf{D}_A) - T_{\text{qual}}(\mathbf{D}_B)\Vert{}_\mathcal{M}}{d_{\mathcal{E}}(\mathbf{D}_A, \mathbf{D}_B)} \to \infty.
\end{equation}

\noindent
Because an infinitesimal, serendipitous shift in initial field contact ($\delta v_0$) forces a total inversion of the substantive theoretical conclusion ($\Vert{}m_{\text{resist}} - m_{\text{collude}}\Vert{}_\mathcal{M} \gg 0$), the qualitative inference violates Hadamard's continuous dependence/stability criterion. Without structural stability safeguards, such as ex-ante archival triangulation, systematic counter-factual informant sampling, and positionality audits, the analytical pipeline collapses into hyper-subjectivity, rendering its findings a sampling artifact rather than an accurate reconstruction of social reality.

\section{Epistemic Guardrails}

As evinced by the case studies highlighted here, social inquiry is natively ill-posed and prone to categorical and asymptotic failure modes. We now construct a set of cross-paradigmatic guardrails. These guardrails do not promise to transform social science into a deterministic physical science, but rather, they serve as regularization operators ($\mathcal{R}_\alpha$) that constrain the inferential state space, bound condition numbers ($\kappa \ll \infty$), and enforce Hadamard well-posedness across quantitative and qualitative research designs.

\subsection{Ex-Ante Falsification Criteria}

To remediate non-uniqueness (observational equivalence and theoretical elasticity), we argue that researchers must establish pre-declared, operational boundaries that prevent theoretical frameworks from acting as all-absorbing elastic maps. For quantitative designs, ex-ante falsification requires defining the exact region of empirical data space $\mathcal{E}_{\text{reject}} \subset \mathcal{E}$ that will force the rejection of hypothesis $m \in \mathcal{M}$, prior to data inspection. This manifests in two particular safeguards:

\begin{itemize}
    \item \textbf{Pre-Specified Effect Bounds:} Beyond reporting standard null-hypothesis significance tests ($p < 0.05$), pre-registration protocols must define a \textit{Minimally Informative Effect Size} (MIES) $\delta_{\text{min}}$. If the estimated parameter falls within the equivalence interval $[-\delta_{\text{min}}, +\delta_{\text{min}}]$, the theoretical mechanism $m$ is formally declared falsified, preventing researchers from reframing statistical noise as ``directional support." \cite{CBC22} proposed a similar safeguard called the \textit{minimally meaningful effect size} (MMES).

    \item \textbf{Structural Failure Thresholds:} We go back to SEM's and econometrics as an example. In data-fitting tasks, researchers must pre-declare maximum acceptable thresholds for goodness-of-fit indices and identification diagnostics (e.g., Kleibergen-Paap $r_k$ Wald $F$-statistic $< 10$ for weak instruments) that automatically invalidate model interpretation rather than prompting post-hoc adjustments.
\end{itemize}

\noindent
For qualitative (e.g., ethnographic) research, we propose mitigating theoretical elasticity by constructing an \textit{ex-ante counter-evidential matrix} prior to field entry or archival coding.

\begin{itemize}
    \item \textbf{Pre-Declaring Disconfirming Indicators:} The qualitative researcher must explicitly specify: ``What observable field behaviors, institutional arrangements, or archival statements ($e \in \mathcal{E}$) would render my theoretical framework ($m$) untenable?".

    \item \textbf{Bounding Interpretive Elasticity:} For example, in an ethnographic study of institutional culture, if the researcher hypothesizes that a community operates on an ethos of egalitarian reciprocity ($m_{\text{egalitarian}}$), they must pre-define threshold observations, such as systematically asymmetrical resource hoarding by sub-group leaders ($e_{\text{hoard}}$), that cannot be re-interpreted as ``functional adaptations" or ``exceptions that prove the rule", thus preventing the researcher from exploiting ad-hoc/post-hoc rationalizations to preserve their preferred interpretive narrative. 
\end{itemize}

\subsection{Empirical Boundary Conditions}

To remediate non-existence (construct invalidity and epistemic vacuums), researchers must define the explicit domain of applicability ($\mathcal{D}_{\text{domain}} \subset \mathcal{E}$) over which the mapping $T: \mathcal{E} \to \mathcal{M}$ is mathematically and conceptually valid. In quantitative contexts, optimization methods must enforce strict, non-negotiable boundaries on parameter estimations. In latent variable models (e.g., CFA/SEM), optimization routines must bound error variances ($\psi_i \ge \epsilon > 0$) and factor correlations ($\vert{}\rho\vert{} \le 1 - \epsilon$) ex-ante. If an empirical covariance matrix forces the algorithm against these boundaries (yielding Heywood cases), the model must be diagnosed as a failure of existence ($e \notin F(\mathcal{M})$), invalidating the construct rather than masking the error via ad-hoc item deletion. In other words, the quantitative model should pre-specify a restricted parameter space $\mathcal{M}_{\text{admissible}}$. \\
\\
For qualitative contexts, we recommend what we refer to as \textit{scope constraints} and \textit{domain truncation}. Qualitative research must replace universalizing theoretical claims with explicit, contextual scope constraints. Alluding to our earlier case study regarding Weber's notion of the state and the DRC ethnography, for example, an ethnography of local governance in a post-conflict zone must define its theoretical outputs ($m$) as plausible only under specified structural conditions (e.g., high land-tenure security, weak central state penetration, and active customary legal institutions). Such scope constraints are effectively the qualitative analog of boundary conditions in differential equations and dynamical systems. Our notion of domain truncation is a safeguard meant to prevent a qualitative researcher from reifying a pre-conceived taxonomic framework onto their empirical observations. Alluding to our earlier case study of the failed mapping of Radcliffe-Brown's structural-functionalist framework onto Yapese \textit{tabinau}, when field observations ($e$) fall outside the domain of pre-existing taxonomies ($\mathcal{M}_{\text{Radcliffe-Brown}}$), qualitative researchers must execute domain truncation, i.e., refusing to force local realities into ill-fitting, pre-conceived (Western) sociological categories, thereby preventing the creation of epistemologically void constructs.

\subsection{Multiverse Sensitivity Auditing}

To remediate quantitative instability ($\kappa(\mathbf{Z}) \gg 1$), quantitative (data pre-processing) pipelines must replace single-specification reporting with exhaustive \textit{multiverse sensitivity audits} that systematically map parameter stability across the entire analytical choice space $\mathcal{S}$. Here is what a multiverse sensitivity audit might look like:

\begin{enumerate}
    \item \textbf{Enumeration of Specification Space ($\mathcal{S}$)}: The researcher constructs a grid of all defensible combinations of data processing, outlier trimming, variable transformation, and covariate selection choices:$$\mathcal{S} = \mathcal{S}_{\text{trim}} \times \mathcal{S}_{\text{trans}} \times \mathcal{S}_{\text{filter}} \times \mathcal{S}_{\text{covariates}}$$

    \item \textbf{Exhaustive Estimation}: The model is estimated across all $K = \vert{}\mathcal{S}\vert{}$ specifications, generating the full empirical distribution of parameters $\mathcal{D}_{\mathcal{S}} = \{(\hat{\beta}_k, p_k)\}_{k=1}^K$.

    \item \textbf{Computation of Conditioning and Stability Metrics}: Instead of selecting a single favored $p$-value, the researcher reports the following:
    \begin{itemize}
        \item \textbf{Specification Volatility Ratio} ($\mathcal{V}_{\mathcal{S}}$): The ratio of the interquartile range of $\hat{\beta}_k$ across the multiverse to its mean standard error:$$\mathcal{V}_{\mathcal{S}} = \frac{\text{IQR}(\{\hat{\beta}_k\})}{\overline{\text{SE}}(\hat{\beta})}$$A high ratio ($\mathcal{V}_{\mathcal{S}} \gg 1$) provides direct empirical evidence of an ill-conditioned, unstable mapping.
        \item \textbf{Inference Sign-Consistency Share} ($\mathcal{C}_{\text{sign}}$): The proportion of specifications in $\mathcal{S}$ that maintain both statistical significance ($p < 0.05$) and directional sign consistency. Claims are designated as Hadamard-stable if and only if $\mathcal{C}_{\text{sign}}$ reaches a pre-specified threshold (e.g., $\mathcal{C}_{\text{sign}} \ge 0.90$).
    \end{itemize}
\end{enumerate}

\subsection{Cross-Observer Validation}

To remediate qualitative instability (hyper-subjectivity and key-informant selection fragility), qualitative research should implement structured, multi-perspective validation operators that are qualitative analogs of numerical regularization techniques. See Table 1 for our proposed safeguards.

\begin{table}[htbp]
\centering
\caption{Cross-Observer Validation Safeguards for Qualitative Stability}
\label{tab:qualitative_safeguards}
\small
\begin{tabular}{p{3.8cm} p{3.2cm} p{7.5cm}}
\toprule
\textbf{Guardrail Mechanism} & \textbf{Primary Target Failure} & \textbf{Procedural Implementation} \\
\midrule
\textbf{Inter-Coder Reliability Metrics} & 
Subjective Coding Drift & 
Blind dual-coding of qualitative transcripts by independent analysts, computing formal inter-coder agreement statistics (Cohen’s $\kappa \ge 0.80$ or Krippendorff’s $\alpha \ge 0.80$) prior to thematic synthesis. \\
\addlinespace
\textbf{Counter-Factual Informant Sampling} & 
Network Sampling Fragility ($v_0$ dependency) & 
Deliberately seeding snowball sampling chains from structurally antagonistic network positions (e.g., initiating parallel interview chains among both opposition activists and regime officials). \\
\addlinespace
\textbf{Archival Triangulation Matrices} & 
Un-triangulated Self-Reporting & 
Subjecting all interview claims to a strict triangulation matrix: no qualitative assertion $m_i$ is admitted as evidence unless corroborated by at least two independent primary sources (e.g., court records, contemporary press, or internal memos). \\
\addlinespace
\textbf{Positionality Auditing} & 
Researcher Bias Perturbations & 
Maintaining a transparent, reflexive field log tracking how the researcher’s demographic, ideological, or institutional positioning alters data access ($\delta v_0$), formally treating positionality as an explicit variable in the qualitative mapping operator $T_{\text{qual}}$. \\
\bottomrule
\end{tabular}
\end{table}

\section{Discussion}

We have shown that many of the persistent methodological crises confronting the social sciences, from quantitative replication failures and $p$-hacking to qualitative theoretical elasticity and hyper-subjectivity, are not merely disparate lapses in research ethics or discipline-specific protocols. Rather, they represent unified manifestations of an underlying structural reality: social inquiry operates as a natively ill-posed inverse problem. By evaluating social research designs through the lens of Jacques Hadamard’s three classical criteria—existence, uniqueness, and stability—we establish a formal meta-framework that we believe can surmount the traditional, often unproductive divide between quantitative and qualitative paradigms. Moreover, beyond providing diagnostic tools for individual research designs, treating social inquiry as an inverse problem offers a formal foundation for interdisciplinary integration. Historically, cross-disciplinary collaborations—particularly between quantitative social scientists (e.g., econometricians, computational sociologists) and qualitative researchers (e.g., interpretivist ethnographers, historical institutionalists)—have suffered from epistemic friction, often talking past one another due to incompatible standards of evidence. The Hadamard framework addresses this impasse by demonstrating that quantitative and qualitative methodologies are effectively different numerical and conceptual discretization schemes applied to the same ill-posed operator equation $T(e) = m$. It is our hope that the importation of Hadamard's criteria into the social sciences, as manifested in our proposed safeguards, can bolster the rigor of social science methodology and encourage greater inter-disciplinary, collaborative knowledge production within the social sciences. 

\printbibliography

@article{Ioa05,
    author = {John P.A. Ioannidis},
    title = {Why Most Published Research Findings Are False},
    journal = {PLOS Medicine},
    volume = {2},
    number = {8},
    year = {2005}
}

@article{Mis+26,
    author = {Olivia Miske and others},
    title = {Investigating the reproducibility of the social and behavioural sciences},
    journal = {Nature},
    volume = {652},
    pages = {126--134},
    year = {2026}
}

@article{Cam+18,
    author = {Colin F. Camerer and others},
    title = {Evaluating the replicability of social science experiments in Nature and Science between 2010 and 2015},
    journal = {Nature Human Behaviour},
    volume = {2},
    pages = {637--644},
    year = {2018}
}

@book{Baz25,
    author = {Max H. Bazerman},
    title = {Inside an Academic Scandal: A Story of Fraud and Betrayal},
    publisher = {MIT Press},
    address = {Cambridge, MA},
    year = {2025}
}

@article{SNS11,
    author = {Joseph P Simmons AND Leif D Nelson AND Uri Simonsohn},
    title = {False-Positive Psychology: Undisclosed Flexibility in Data Collection and Analysis Allows Presenting Anything as Significant},
    journal = {Psychological Science},
    volume = {22},
    number = {11},
    pages = {1359--1366},
    year = {2011}
}

@article{GL14,
    author = {Andrew Gelman AND Eric Loken},
    title = {The statistical crisis in science},
    journal = {American Scientist},
    volume = {102},
    number = {6},
    pages = {460--465},
    year = {2014}
}

@book{Pop59,
    author = {Karl Popper},
    title = {The Logic of Scientific Discovery},
    publisher = {Routledge},
    address = {Abingdon, UK},
    year = {1959}
}

@book{Pop63,
    author = {Karl Popper},
    title = {Conjectures and Refutations: The Growth of Scientific Knowledge},
    publisher = {Routledge},
    address = {Abingdon, UK},
    year = {1963}
}

@book{Lak78,
    author = {Imre Lakatos},
    title = {The Methodology of Scientific Research Programmes},
    publisher = {Cambridge University Press},
    address = {Cambridge, UK},
    year = {1978}
}

@article{Had1902,
  author = {Hadamard, Jacques},
  title = {Sur les probl\`emes aux d\'eriv\'ees partielles et leur signification physique},
  journal = {Princeton University Bulletin},
  volume = {13},
  pages = {49--52},
  year = {1902}
}

@book{Had1923,
  author    = {Hadamard, Jacques},
  title     = {Lectures on Cauchy's Problem in Linear Partial Differential Equations},
  publisher = {Yale University Press},
  address   = {New Haven, CT},
  year      = {1923}
}

@book{Sol15,
    author = {Justin Solomon},
    title = {Numerical Algorithms},
    publisher = {AK Peters, Ltd.},
    address = {Natick, MA},
    year = {2015}
}

@book{Woo13,
    author = {Jeffrey M. Wooldridge},
    title = {Introductory Econometrics: A Modern Approach},
    publisher = {Cengage},
    address = {Mason, OH},
    year = {2013}
}

@article{LT16,
  author    = {Li, Jinkai and Titi, Edriss},
  title     = {Global well-posedness of strong solutions to a tropical climate model},
  journal   = {Discrete \& Continuous Dynamical Systems},
  year      = {2016},
  volume    = {36},
  number    = {8},
  pages     = {4495--4516},
}

@article{San+05,
  author    = {Sandu, Adrian and Daescu, Dacian N. and Carmichael, Gregory R. and Chai, Tianfeng},
  title     = {Adjoint sensitivity analysis of regional air quality models},
  journal   = {Journal of Computational Physics},
  year      = {2005},
  volume    = {204},
  number    = {1},
  pages     = {222--252},
}

@article{Sal83,
    author = {Dominick Salvatore},
    title = {A Simultaneous Equations Model of Trade and Development with Dynamic Policy Simulations},
    journal = {Kyklos},
    year = {1983},
    volume = {36},
    number = {1},
    year = {1983}
}

@article{WBA21,
    author = {Lisa D. Wijsen AND Denny Borsboom AND Anna Alexandrova },
    title = {Values in Psychometrics},
    journal = {Perspectives on Psychological Science},
    volume = {17},
    number = {3},
    year = {2021}
}

@article{GA23,
    author = {Himani Goyal AND Sheema Aleem},
    title = {Confirmatory Factor Analysis (CFA) and Psychometric Validation of Healthy Lifestyle and Personal Control Questionnaire (HLPCQ) in India},
    journal = {Indian Journal of Community Medicine},
    volume = {48},
    number = {3},
    year = {2023}
}

@book{MC86,
    author = {George E. Marcus AND James Clifford},
    title = {Writing Culture: The Poetics and Politics of Ethnography},
    publisher = {University of California Press},
    address = {Oakland, CA},
    year = {1986}
}

@book{Gee73,
    author = {Clifford J. Geertz},
    title = {The Interpretation of Cultures: Selected Essays},
    publisher = {Basic Books},
    address = {New York, NY},
    year = {1973}
}

@book{RB52,
  title={Structure and Function in Primitive Society: Essays and Addresses},
  author={Radcliffe-Brown, A. R.},
  year={1952},
  publisher={The Free Press},
  address={Glencoe, Illinois}
}

@article{Hel97,
    author = {Thomas Helmig},
    title = {The Concept of Kinship on Yap and the Discussion of the Concept of Kinship},
    journal = {Journal of Anthropological Research},
    volume = {53},
    number = {1},
    year = {1997}
}

@inproceedings{Mit11,
    author = {Simeon Mitropolitski},
    title = {Weber’s Definition of the State as an Ethnographic Tool for Understanding the Contemporary Political Science State of the Discipline},
    booktitle = {Annual Conference of the Canadian Political Science Association, Wilfrid Laurier University,},
    year = {2011}
}

@techreport{Hen+24,
  author      = {Henn, Soeren J. AND Marchais, Gauthier AND Mastaki Mugaruka, Christian AND S{\'a}nchez de la Sierra, Ra{\'u}l},
  title       = {Indirect Rule: Armed Groups and Customary Chiefs in Eastern {DRC}},
  institution = {International Centre for Tax and Development (ICTD)},
  type        = {ICTD Working Paper},
  number      = {182},
  year        = {2024}
}

@article{KMZ06,
    author = {E. Han Kim AND Adair Morse AND Luigi Zingales},
    title = {What Has Mattered to Economics Since 1970},
    journal = {Journal of Economic Perspectives},
    volume = {20},
    number = {4},
    pages = {189--202},
    year = {2006}
}

@unpublished{Hei+23,
    author = {Rawley Z. Heimer AND Zwetelina Iliewa AND Alex Imas AND Martin Weber},
    title = {Dynamic Inconsistency in Risky Choice: Evidence from the Lab and Field},
    year = {2023},
    note = {NBER Working Paper Series}
}

@article{TK92,
    author = {Amos Tversky AND Daniel Kahneman},
    title = {Advances in prospect theory: Cumulative representation of uncertainty},
    journal = {Journal of Risk and Uncertainty},
    volume = {5},
    pages = {297--323},
    year = {1992}
}

@article{Kat24,
    author = {Konstantinos V. Katsikopoulos},
    title = {Making valuations with the priority heuristic},
    journal = {Journal of Mathematical Psychology},
    volume = {123},
    year = {2024}
}

@article{HCC23,
    author = {Guy E. Hawkins AND Gavin Cooper AND Jon-Paul Cavallaro},
    title = {The standard relationship between choice frequency and choice time is violated in multi-attribute preferential choice},
    journal = {Journal of Mathematical Psychology},
    volume = {115},
    year = {2023}
}

@article{Lea88,
    author = {Eleanor Leacock},
    title = {Anthropologists in Search of a Culture: Margaret Mead, Derek Freeman and All the Rest of Us},
    journal = {Central Issues in Anthropology},
    volume = {8},
    number = {1},
    pages = {3--20},
    year = {1988}
}

@book{Mea28,
    author = {Margaret Mead},
    title = {Coming of Age in Samoa},
    publisher = {William Morrow \& Co.},
    address = {Rahway, NJ},
    year = {1928}
}

@article{Buc+23,
    author = {Christy M. Buchanan AND Daniel Romer AND Laura Wray-Lake AND Sheretta T. Butler-Barnes},
    title = {Adolescent storm and stress: a 21st century evaluation},
    journal = {Frontiers in Psychology},
    volume = {14},
    year = {2023}
}

@book{Fre83,
  author    = {Freeman, Derek},
  title     = {Margaret Mead and Samoa: The Making and Unmaking of an Anthropological Myth},
  year      = {1983},
  publisher = {Harvard University Press},
  address   = {Cambridge, MA},
}

@article{SR21,
    author = {Jeffrey W. Sherman AND Andrew M. Rivers},
    title = {There’s Nothing Social about Social Priming: Derailing the Train Wreck},
    journal = {Psychological Inquiry},
    volume = {32},
    year = {2021}
}

@book{KR19,
  title     = {The Ethical Algorithm: The Science of Socially Aware Algorithm Design},
  author    = {Kearns, Michael and Roth, Aaron},
  year      = {2019},
  publisher = {Oxford University Press},
  address   = {New York, NY},
}

@article{Wol78,
    author = {Alvin W. Wolfe},
    title = {The rise of network thinking in anthropology},
    journal = {Social Networks},
    volume = {1},
    number = {1},
    year = {1978}
}

@unpublished{CBC22,
    author = {Carmel Camilleria AND Nataly Berbisky AND Robert A. Cribbie},
    title = {The Minimally Meaningful Effect Size: A Vital Component of Pre-Registrations},
    year = {2022},
    note = {PsyArXiv Preprint}
}
\end{document}